\newtheorem{theorem}{Theorem}
\newtheorem{lemma}[theorem]{Lemma}
\theoremstyle{remark}
\newtheorem{remark}[theorem]{Remark}
\newtheorem{example}[theorem]{Example}
\theoremstyle{definition}
\newtheorem{definition}[theorem]{Definition}
\begin{document}

\title[An Explicit Formula for the Eigenvectors of Acyclic Matrices and Weighted Trees]{An Explicit Formula for the Eigenvectors of Acyclic Matrices and Weighted Trees}%

\author{Asghar Bahmani}
\address{Department of Mathematics and Computer Science, Amirkabir University of Technology, 424, Hafez Ave., Tehran 15914, Iran, and School of Mathematics, Institute for Research in Fundamental Sciences (IPM), P.O. Box 19395-5746, Tehran, Iran.}
\email{asghar.bahmani@aut.ac.ir}
\author{Dariush Kiani}
\address{Department of Mathematics and Computer Science, Amirkabir University of Technology, 424, Hafez Ave., Tehran 15914, Iran, and School of Mathematics, Institute for Research in Fundamental Sciences (IPM), P.O. Box 19395-5746, Tehran, Iran.}
\email{dkiani@aut.ac.ir, dkiani7@gmail.com}

\subjclass[2010]{05C50,15A18}
\keywords{acyclic matrix; tree; eigenvector; matching polynomial; characteristic polynomial}%

\begin{abstract}
Let $A$ be an acyclic symmetric matrix of order $n$. There is a weighted forest $F$ whose  adjacency matrix is $A$. In this paper, using some results on matching polynomials, we provide an explicit formula for  eigenvectors of $A$.
\end{abstract}
\maketitle

\section{Introduction}

For a positive integer $n$, we denote by $\text{\rm Sym}_{n}(\mathbb{R})$, the set of real symmetric matrices of order $n$. We denote by $[n]$ the set $\{1,\ldots,n\}$.
For an arbitrary symmetric matrix $A$,  $\phi(A,\lambda)=|\lambda \mathbb{I}-A|$ is the characteristic polynomial of $A$.
For an index set $U$ and a symmetric matrix $A$, $A-U$ is the matrix that is obtained from $A$ by removing rows and columns corresponding to $U$.

\begin{definition}[\textbf{Acyclic matrix}]
Let $A$ be a symmetric real matrix. We say $A$ is acyclic if $A$ is an adjacency matrix of a weighted forest with real nonzero weights on edges (and possibly on vertices). If $A$ is irreducible, then it is an adjacency matrix of a weighted tree.
\end{definition}

Eigenvectors of matrices in linear algebra and eigenvectors of matrices associated with graphs have many applications in algebraic graph theory such as drawing of graphs, partitioning of sparse matrices, partitioning networks, and optimization. See for example \cite{Ko}, \cite{PSL}, \cite{SR}, and \cite{Sp}.

There are several theorems such that for some matrices give  formulas for the eigenvectors of their eigenvalues.
Some of these results are

\begin{enumerate}[i.]

\item {Cayley Graphs} \cite[Section 1.4]{BH},

\item {Circulant Matrices} \cite{Gr},

\item {Paths and Cycles} \cite[Section 1.4]{BH},

\item {Tridiagonal Matrices} \cite[11.4]{Pr},\cite{Kou},

\item {Cartesian, Kronecker, Strong Products of Graphs} \cite[Section 1.4]{BH}.

\end{enumerate}

In this paper we use some theorems of matching polynomials of graphs  to find a formula for the eigenvectors of acyclic symmetric matrices and weighted trees.

The main results of this paper are Theorems \ref{simpleeig} and \ref{multipleeig}.

\section{Preliminaries}
In this section, we review some results on star sets of symmetric matrices and matching polynomials that we need in this paper.

\subsection{Star Sets}

Let $A\in \text{\rm Sym}_{n}(\mathbb{R})$ and $\lambda$ be an eigenvalue of $A$ of multiplicity $k$. A set $U\subseteq [n]$ is a \textit{star set} for $\lambda$ (or $\lambda$-star set) of  $A$ if $|U|=k$ and $\lambda$  is not an eigenvalue of the submatrix of $A$ obtained by removing rows and columns with index in $U$. It is known that for every eigenvalue $\lambda$ there exists a $\lambda$-star set \cite{CRS}.

By definition of star set, it is clear that for an index set $U$ such that $|U|=k$, $\phi(A-U,\lambda)\neq 0$ if and only if  $U$ is a $\lambda$-star set. The following lemma characterizes the star sets for simple eigenvalues.

\begin{lemma}[\textbf{Star set of a simple eigenvalue}]\label{starsetsimple}
Let $A$ be a symmetric matrix with a simple eigenvalue $\lambda$  and a $\lambda$-eigenvector $\boldsymbol{\alpha}$. If $u$ is a row (and column) index of $A$, then $\{u\}$ is a $\lambda$-star set if and only if  $\boldsymbol{\alpha}(u)\neq 0$.
\end{lemma}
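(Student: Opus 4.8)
The plan is to reduce the statement to a claim about the principal submatrix and then settle the two implications directly from the eigenvector equation. Since $\lambda$ is simple its multiplicity is $1$, so by the observation preceding the lemma $\{u\}$ is a $\lambda$-star set precisely when $\phi(A-u,\lambda)\neq 0$, i.e. precisely when $\lambda$ is not an eigenvalue of $A-u$. Thus it suffices to show that $\lambda$ fails to be an eigenvalue of $A-u$ if and only if $\boldsymbol{\alpha}(u)\neq 0$, and I would prove each direction by contraposition. For the easy direction, suppose $\boldsymbol{\alpha}(u)=0$ and let $\boldsymbol{\alpha}'$ be the vector obtained from $\boldsymbol{\alpha}$ by deleting its $u$-th entry. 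The rows of $A\boldsymbol{\alpha}=\lambda\boldsymbol{\alpha}$ indexed by $i\neq u$ read $\sum_{j\neq u}A_{ij}\boldsymbol{\alpha}(j)=\lambda\boldsymbol{\alpha}(i)$, because the missing term $A_{iu}\boldsymbol{\alpha}(u)$ vanishes; hence $(A-u)\boldsymbol{\alpha}'=\lambda\boldsymbol{\alpha}'$. As $\boldsymbol{\alpha}\neq 0$ while $\boldsymbol{\alpha}(u)=0$, the vector $\boldsymbol{\alpha}'$ is nonzero, so $\lambda$ is an eigenvalue of $A-u$ and $\{u\}$ is not a star set.

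The converse is the harder direction and is where I expect the main obstacle. Suppose $\lambda$ is an eigenvalue of $A-u$ with eigenvector $\boldsymbol{\beta}'$, and let $\boldsymbol{\beta}$ be its extension to $[n]$ obtained by setting $\boldsymbol{\beta}(u)=0$. One checks immediately that $(A\boldsymbol{\beta})(i)=\lambda\boldsymbol{\beta}(i)$ for every $i\neq u$, but the $u$-th equation need not hold a priori; forcing it is the crux. To do so I would use symmetry: from $A=A^{T}$ and $A\boldsymbol{\alpha}=\lambda\boldsymbol{\alpha}$ we get $\boldsymbol{\alpha}^{T}A\boldsymbol{\beta}=\lambda\,\boldsymbol{\alpha}^{T}\boldsymbol{\beta}$, while expanding $\boldsymbol{\alpha}^{T}A\boldsymbol{\beta}=\sum_{i}\boldsymbol{\alpha}(i)(A\boldsymbol{\beta})(i)$ and using $\boldsymbol{\beta}(u)=0$ together with the relations just established yields $\boldsymbol{\alpha}^{T}A\boldsymbol{\beta}=\boldsymbol{\alpha}(u)\,(A\boldsymbol{\beta})(u)+\lambda\,\boldsymbol{\alpha}^{T}\boldsymbol{\beta}$. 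Comparing the two expressions gives $\boldsymbol{\alpha}(u)\,(A\boldsymbol{\beta})(u)=0$. Consequently, if $\boldsymbol{\alpha}(u)\neq 0$ then $(A\boldsymbol{\beta})(u)=0=\lambda\boldsymbol{\beta}(u)$, so in fact $A\boldsymbol{\beta}=\lambda\boldsymbol{\beta}$; but then $\boldsymbol{\alpha}$ and $\boldsymbol{\beta}$ are two $\lambda$-eigenvectors that are linearly independent (they differ in whether their $u$-th entry vanishes), contradicting the simplicity of $\lambda$. Hence $\lambda$ being an eigenvalue of $A-u$ forces $\boldsymbol{\alpha}(u)=0$, which is the contrapositive of the remaining implication.

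Alternatively, I could obtain an explicit identity in the spirit of the paper through the adjugate of $\lambda\mathbb{I}-A$. Since $\lambda$ is simple, $\lambda\mathbb{I}-A$ has rank $n-1$, so $\mathrm{adj}(\lambda\mathbb{I}-A)$ has rank exactly one; its columns lie in the one-dimensional $\lambda$-eigenspace and, by symmetry, $\mathrm{adj}(\lambda\mathbb{I}-A)=c\,\boldsymbol{\alpha}\boldsymbol{\alpha}^{T}$ for some scalar $c\neq 0$. Reading off the $(u,u)$ diagonal entry, the corresponding cofactor is exactly $\phi(A-u,\lambda)$, whence $\phi(A-u,\lambda)=c\,\boldsymbol{\alpha}(u)^{2}$. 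Because $c\neq 0$, the left-hand side vanishes if and only if $\boldsymbol{\alpha}(u)=0$, which recovers the lemma and pins the star-set condition down quantitatively. On this route the only delicate point is justifying the rank-one factorization with $c\neq 0$; the rest is the standard cofactor--minor identity, and I would present the elementary argument above as the primary proof.
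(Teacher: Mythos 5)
Your proof is correct, and its core is the same as the paper's: both settle the hard direction by extending a putative $\lambda$-eigenvector $\boldsymbol{\beta}'$ of $A-u$ by zero and using the symmetric pairing $\boldsymbol{\alpha}^{T}(\lambda\mathbb{I}-A)\boldsymbol{\beta}$ to force either $\boldsymbol{\alpha}(u)=0$ or a second independent eigenvector contradicting simplicity. You improve on the paper's presentation in two small ways: your single identity $\boldsymbol{\alpha}(u)\,(A\boldsymbol{\beta})(u)=0$ merges the paper's two cases (its Case 1 is $(A\boldsymbol{\beta})(u)=\boldsymbol{x}^{T}\boldsymbol{\beta}=0$, its Case 2 is the complementary computation), and you prove the easy direction directly by restricting $\boldsymbol{\alpha}$, whereas the paper outsources it to Lemma 7 of \cite{BK2}, so your version is self-contained. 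Your adjugate alternative is a genuinely different route: the identity $\phi(A-u,\lambda)=c\,\boldsymbol{\alpha}(u)^{2}$ with $c\neq 0$ (which follows since $\mathrm{tr}\,\mathrm{adj}(\lambda\mathbb{I}-A)=\phi'(A,\lambda)\neq 0$ for a simple root) not only proves the lemma but quantifies it, and in fact anticipates the paper's later relation (\ref{uvphi}); it would be worth recording. Both of your arguments are sound as written.
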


\begin{proof}
Suppose that  $\{u\}$ is a $\lambda$-star set and assume by contradiction that $\boldsymbol{\alpha}(u)=0$. Now, by Lemma 7 
of \cite{BK2}, $\lambda$ is an eigenvalue of $A-u$, too, a contradiction by assumption ($\{u\}$ is a $\lambda$-star set). 

Now, suppose that  $\boldsymbol{\alpha}(u)\neq 0$.
Suppose that $I$ is the set of row indices of $A$ and  
$A=\begin{blockarray}{cc|c}
\begin{block}{c(c|c)}
\text{\footnotesize $u$ } & a  & \boldsymbol{x}^{T}   \\\cline{1-3}
\text{\tiny $I-\{u\}$ }&\boldsymbol{x}  &  M  \\
\end{block}
\end{blockarray}
$, for some $M$. It is sufficient to show that  $m_{M}(\lambda)=0$. Suppose, on the contrary, $M$ has a $\lambda$-eigenvector $\boldsymbol{\beta}$.
We have two cases:

\textbf{Case 1.}  $\boldsymbol{x}^{T}\boldsymbol{\beta}=0$: So, $\boldsymbol{y}$ is a $\lambda$-eigenvector of  $A$, where $\boldsymbol{y}(v)=
\begin{cases}
\boldsymbol{\beta}(v)& v\neq u,\\
0 & v=u
\end{cases}$.
Since $\boldsymbol{\alpha}(u)\neq 0$, the vectors $\boldsymbol{\alpha}$ and $\boldsymbol{y}$ are independent and we have a contradiction with $m_{A}(\lambda)=1$. 

\textbf{Case 2.} $\boldsymbol{x}^{T}\boldsymbol{\beta}\neq 0$: So,
\[
0=(\lambda\mathbb{I}-A)\boldsymbol{\alpha}\Rightarrow \boldsymbol{\alpha}(u)\,\boldsymbol{x}=(\lambda\mathbb{I}-M)\boldsymbol{\alpha}_{|\text{\tiny $I-\{u\}$ }}\Rightarrow\]\[
\boldsymbol{\alpha}(u)\boldsymbol{\beta}^{T}\boldsymbol{x}=\boldsymbol{\beta}^{T}(\lambda\mathbb{I}-M)\boldsymbol{\alpha}_{|\text{\tiny $I-\{u\}$ }}=0 \xRightarrow{\boldsymbol{\beta}^{T}\boldsymbol{x}\neq 0}
\boldsymbol{\alpha}(u)=0,
\]
 and we have a contradiction. This completes the proof.

\end{proof}

\subsection{{Matching Polynomial}}
We need these definitions and theorems on matching polynomials.
For a  given (weighted) graph $G$ and two its vertices $u$ and $v$, the set of all paths from $u$ to $v$ is denoted by $\mathcal{P}(u,v)$. If $H$ is a subgraph of $G$ and $u\notin V(H)$, the set of all paths from $u$ to some vertex in $H$, say $z$,  without  visiting  $H$  before $z$, is denoted by  $\mathcal{P}(u,H)$.
When $G$ is a tree, we denote by $P_{u,v}$ the path between $u$ and $v$.

\begin{definition}[\textbf{Matching}]
Let $G$ be a simple graph and $k$ be a nonnegative integer. A set of $k$ edges of $G$ is called a matching (or $k$-matching), if and only if no two edges have a common vertex.
\end{definition}
One of the subjects on matchings is algebraic approach and considering generating polynomials. Matching polynomials have been introduced and considered in various areas such as physics and chemistry, see \cite{CDGT} and \cite{Go} for background history. We need the following definitions. 

\begin{definition}[\textbf{Matching polynomials for simple graphs}]
Let $n\in\mathbb{N}$ and $G$ be a simple graph on $n$ vertices. For a nonnegative integer $k$, we denote by $p(G,k)$ the number of  $k$-matching of $G$. The polynomial $\mu(G,x)=\displaystyle \sum_{k=0}^{\lfloor\frac{n}{2}\rfloor} (-1)^{k}p(G,k)x^{n-2k}$ is the matching polynomial of $G$. 
\end{definition}

Assume that $G$ is a simple graph and $w:E(G)\rightarrow \mathbb{R}$ is a weight function on $G$. We denote this weighted graph by $G_w$.  For an arbitrary $k$-matching $\mathfrak{m}$ of $G$, we define its weight as:
\[
W(\mathfrak{m})=\prod_{e\in \mathfrak{m} }w(e),\, W(\emptyset)=1.
\]
Now, we have the definition of matching polynomial of a weighted graph with weights on its edges.

\begin{definition}[\textbf{Weights on edges}]
Let $n\in \mathbb{N}$ and $G$ be a simple graph with a weight function $w:E(G)\rightarrow \mathbb{R}$.
For a nonnegative integer $k$, 
\[p(G_w,k)=\displaystyle\sum_{\mathfrak{m}:k\text{-matching}}(W(\mathfrak{m}))^{2}\]
is the sum of the weighted $k$-matchings of $G_w$. The polynomial 
\[\mu(G_w,x)=\displaystyle \sum_{k=0}^{\lfloor\frac{n}{2}\rfloor} (-1)^{k}p(G_w,k)x^{n-2k}\]
 is the matching polynomial of $G_w$. 
\end{definition}
For weighted graphs with weights on edges and vertices, we have the following generalized case:

\begin{definition}[\textbf{Weights on edges and vertices}]
Let $n\in \mathbb{N}$ and $G$ be a simple graph with a weight function $w:V(G)\cup E(G)\rightarrow \mathbb{R}$.
The polynomial 
\[\mu(G_w,x)=\displaystyle\sum_{\mathfrak{m}}(-1)^{|\mathfrak{m}|}(W(\mathfrak{m}))^{2}\prod_{u \text{ is not in } \mathfrak{m}}(x-w(u))\]
 is the matching polynomial of $G_w$, where 
$|\mathfrak{m}|$ is the number of edges of $\mathfrak{m}$.
\end{definition}
In case of empty graph, we define $\mu(\emptyset,x)=1$.
For more details, see
\cite{CDGT}, \cite{Go}, \cite{Go2}, \cite{HL},  and  \cite{KW}.
We know that the roots of matching polynomials are real \cite[Theorem 4.27]{CDGT},\cite[Theorem 6.2]{Go},\cite[Theorem 4.2]{HL},\cite[Theorem 2.17]{KW}.
The following theorem, is the  key relationship between the matching polynomial and the characteristic polynomial of the adjacency matrix  of a weighted forest.

\begin{theorem}{\rm \cite[Theorem 4.26]{CDGT},\cite[ Corollary 2.9]{KW}}\label{matchchar}
Let $F$ be a weighted Forest. Then  $\phi(F,x)=\mu(F,x)$.
\end{theorem}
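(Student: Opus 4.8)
The plan is to expand the characteristic determinant directly by the Leibniz formula and exploit the acyclicity of $F$ to discard almost every permutation. Write $A$ for the adjacency matrix of $F$, so that $A_{uu}=w(u)$, $A_{uv}=A_{vu}=w(uv)$ whenever $uv\in E(F)$, and $A_{uv}=0$ otherwise. Then
\[
\phi(F,x)=|x\mathbb{I}-A|=\sum_{\sigma\in S_n}\mathrm{sgn}(\sigma)\prod_{i=1}^{n}(x\mathbb{I}-A)_{i,\sigma(i)}.
\]
First I would decompose each permutation $\sigma$ into disjoint cycles and observe that its contribution factors as a product over these cycles: a fixed point $u$ contributes the diagonal entry $(x-w(u))$, a transposition $(u\,v)$ contributes $(x\mathbb{I}-A)_{uv}(x\mathbb{I}-A)_{vu}=w(uv)^{2}$, and a cycle $(u_1\,u_2\,\cdots\,u_k)$ of length $k\ge 3$ contributes $\prod_{j}(-A_{u_j,u_{j+1}})$ (indices mod $k$).

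The crucial step---and the only place the forest hypothesis enters---is that every cycle of length $k\ge 3$ contributes zero. Indeed, for such a factor to be nonzero each entry $A_{u_j,u_{j+1}}$ must be nonzero, i.e.\ each $u_ju_{j+1}$ must be an edge; since the $u_j$ are distinct, the edges $u_1u_2,\dots,u_ku_1$ would form a cycle in $F$, contradicting that $F$ is a forest. Hence only permutations whose cycle type consists solely of fixed points and transpositions survive, and in a transposition $(u\,v)$ the pair $uv$ is forced to be an edge. Each such surviving permutation therefore corresponds bijectively to a matching $\mathfrak{m}$ of $F$, the transpositions being its edges and the fixed points being the uncovered vertices.

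It then remains to read off the sign and weight of each surviving term. A permutation built from $|\mathfrak{m}|$ disjoint transpositions has sign $(-1)^{|\mathfrak{m}|}$, the two off-diagonal entries $(x\mathbb{I}-A)_{uv}=-w(uv)$ in each transposition multiply to $w(uv)^{2}$, and each fixed point supplies $(x-w(u))$, so the total contribution of the permutation attached to $\mathfrak{m}$ is
\[
(-1)^{|\mathfrak{m}|}\,\big(W(\mathfrak{m})\big)^{2}\prod_{u\text{ not in }\mathfrak{m}}(x-w(u)).
\]
Summing over all matchings reproduces exactly the defining expression for $\mu(F,x)$, completing the proof. The main obstacle is purely the cycle argument of the second paragraph; once long permutation cycles are ruled out, the identification with the matching polynomial is just bookkeeping of signs and weights, with the $|\mathfrak{m}|$ transposition signs supplying the global factor $(-1)^{|\mathfrak{m}|}$.
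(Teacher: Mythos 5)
Your proof is correct and complete: the Leibniz expansion, the observation that any permutation cycle of length at least three would force a graph cycle (impossible in a forest), and the resulting bijection between surviving permutations and matchings, with sign $(-1)^{|\mathfrak{m}|}$ and weight $(W(\mathfrak{m}))^{2}\prod_{u\notin\mathfrak{m}}(x-w(u))$, exactly reproduce the paper's definition of $\mu(F,x)$ with weights on both edges and vertices. Note that the paper itself offers no proof of this statement---it is quoted from \cite{CDGT} and \cite{KW}---so there is nothing internal to compare against; your argument is the standard one underlying those references, and it is the right level of generality (it correctly handles the diagonal entries $w(u)$, which is the point where the unweighted textbook version needs the small extension you supplied).
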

The relations below, are two important relations on matching polynomials.

\begin{theorem}{\rm \cite[Section 4.2]{CDGT},\cite[Relation 4.3]{HL},\cite[Theorem 2.1]{KW}}\label{relationsheli}
Let $G_w$ be a weighted graph ($w:V(G)\cup E(G) \rightarrow \mathbb{R}$). We have the following relations.
\begin{enumerate}[i.]

\item $\displaystyle \frac{d\mu(G_w,x)}{dx}=\displaystyle\sum_{v\in V(G)}\mu(G_w-v,x)$,
\item For every vertex $v$,
\begin{equation}\label{heli}
\mu(G_w,x)=(x-w(v))\mu(G_w -v,x)-\displaystyle\sum_{z\neq v}(w(zv))^{2}\mu(G_w-\{v,z\},x),
\end{equation}
where $w(zv)$ is the weight of the edge $zv$.
 \end{enumerate}
\end{theorem}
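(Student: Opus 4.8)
The plan is to prove both identities directly from the defining formula for $\mu(G_w,x)$, using a partition of the matchings of $G$ according to their interaction with a distinguished vertex $v$ for the second relation, and the product rule together with an interchange of summation for the first. Throughout I would adopt the convention that $w(zv)=0$ whenever $zv\notin E(G)$, so that a sum written over all $z\neq v$ automatically restricts to the neighbors of $v$; this is exactly what lets relation (\ref{heli}) be stated as a sum over all $z\neq v$.

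For relation (ii), I would fix $v$ and split every matching $\mathfrak{m}$ of $G$ into two disjoint classes: those in which $v$ is \emph{unsaturated} (covered by no edge of $\mathfrak{m}$), and those in which $v$ is saturated by a unique edge $zv$. The unsaturated matchings are in bijection with the matchings of $G_w-v$, and for such a matching the defining product contains the factor $(x-w(v))$ coming from the uncovered vertex $v$; stripping this factor leaves exactly the corresponding summand of $\mu(G_w-v,x)$, so this class contributes $(x-w(v))\,\mu(G_w-v,x)$. For the saturated matchings, deleting the edge $zv$ together with its two endpoints gives a bijection with the matchings of $G_w-\{v,z\}$; the extra edge multiplies $(W(\mathfrak{m}))^2$ by $(w(zv))^2$ and multiplies the sign $(-1)^{|\mathfrak{m}|}$ by $-1$. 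Summing over the possible partners $z$ then yields $-\sum_{z\neq v}(w(zv))^2\,\mu(G_w-\{v,z\},x)$, and adding the two contributions produces (\ref{heli}).

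For relation (i), I would differentiate the defining sum term by term. The only $x$-dependence lies in $\prod_{u\text{ not in }\mathfrak{m}}(x-w(u))$, so by the product rule its derivative is $\sum_{v\text{ not in }\mathfrak{m}}\prod_{u\neq v,\,u\text{ not in }\mathfrak{m}}(x-w(u))$. After substituting this back, I would interchange the two summations and regroup by the distinguished vertex $v$: for each fixed $v$ the inner sum runs precisely over those matchings $\mathfrak{m}$ that leave $v$ unsaturated, i.e.\ over the matchings of $G-v$, and the remaining product is exactly the product appearing in the summand of $\mu(G_w-v,x)$. This recasts the derivative as $\sum_{v\in V(G)}\mu(G_w-v,x)$, as required.

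Both arguments are essentially bookkeeping, so I expect no serious obstacle. The one step demanding care is the sign and weight accounting in (ii): verifying that saturating $v$ by the edge $zv$ alters $(-1)^{|\mathfrak{m}|}(W(\mathfrak{m}))^2$ by exactly the factor $-(w(zv))^2$ and nothing else, so that the signs and squared edge weights in the summation land correctly and the vanishing of nonadjacent terms justifies the unrestricted sum over $z\neq v$.
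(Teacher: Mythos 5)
Your proof is correct. Note that the paper does not actually prove Theorem \ref{relationsheli} --- it is quoted from the literature (\cite{CDGT}, \cite{HL}, \cite{KW}) --- so there is no internal proof to compare against; your argument (partitioning matchings by whether $v$ is saturated for (ii), and term-by-term differentiation with an interchange of summation for (i)) is the standard one given in those references, and the sign/weight bookkeeping you flag as the delicate step does work out exactly as you describe, with the convention $w(zv)=0$ for non-edges justifying the unrestricted sum over $z\neq v$.
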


For an arbitrary path $P$, we define its weight as
\[
W(P)=\prod_{e\in E(P) }w(e) \text{ and }  W(P)=1  \text{ if $P$ is a vertex}.
\]
The following important theorem gives a relationship between paths of a graph and the matching polynomial.
\begin{theorem}{\rm \cite[Theorem 6.3]{HL}}\label{helis}
Let $G_w$ be a weighted graph ($w:V(G)\cup E(G) \rightarrow \mathbb{R}$). If $u\in V(G)$ and $H$ is a subgraph of $G$ such that $u\notin V(H)$, then we have
\[
\mu(G_w-u,x)\mu(G_w-H,x)-\mu(G_w,x)\mu(G_w-\{u,H\},x)=\]\[
\sum_{P\in \mathcal{P}(u,H)}(W(P))^{2}\mu(G_w-P,x)\mu(G_w-\{H,P\},x).
\]
\end{theorem}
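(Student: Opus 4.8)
The plan is to prove the identity by induction on the number of vertices of $G$, peeling off the vertex $u$ by means of the deletion recurrence \eqref{heli}. Write $\Phi_G(u,H)$ for the left-hand side $\mu(G_w-u,x)\mu(G_w-H,x)-\mu(G_w,x)\mu(G_w-\{u,H\},x)$; the claim is that $\Phi_G(u,H)$ equals the path sum on the right. The base case, where $u$ is isolated in $G$ (in particular when $G$ has a single vertex), is immediate: applied at $u$, \eqref{heli} has no edge terms, so $\mu(G_w,x)=(x-w(u))\mu(G_w-u,x)$ and likewise $\mu(G_w-H,x)=(x-w(u))\mu(G_w-\{u,H\},x)$, whence $\Phi_G(u,H)=0$; and $\mathcal{P}(u,H)=\emptyset$ since $u$ has no neighbours, so the right-hand side vanishes as well.

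For the inductive step, I would apply \eqref{heli} at the vertex $u$ to both $\mu(G_w,x)$ and $\mu(G_w-H,x)$, noting that $u\notin V(H)$, so $u$ survives in $G_w-H$ and its neighbours there are exactly the neighbours $z$ of $u$ with $z\notin V(H)$. Substituting both expansions into $\Phi_G(u,H)$, the two leading terms proportional to $(x-w(u))\mu(G_w-u,x)\mu(G_w-\{u,H\},x)$ cancel, leaving only edge terms. Regrouping by the neighbour $z$ of $u$, the neighbours with $z\notin V(H)$ assemble precisely into $w(uz)^2\,\Phi_{G-u}(z,H)$, the same expression for the smaller graph $G_w-u$ with root $z$ and the same subgraph $H$, while the neighbours with $z\in V(H)$ contribute $w(uz)^2\,\mu(G_w-\{u,z\},x)\,\mu(G_w-\{u,H\},x)$.

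It then remains to match these two contributions against the path sum. Every path $P\in\mathcal{P}(u,H)$ has a first edge $uz$. If $z\in V(H)$ the path is exactly this edge, and since $V(H)\cup V(P)=V(H)\cup\{u\}$ its summand is $w(uz)^2\mu(G_w-\{u,z\},x)\mu(G_w-\{u,H\},x)$, matching the $z\in V(H)$ terms above. If $z\notin V(H)$, then deleting the first edge gives a bijection $P\leftrightarrow(z,Q)$ with $Q\in\mathcal{P}_{G-u}(z,H)$; using $W(P)^2=w(uz)^2W(Q)^2$, $G_w-P=(G_w-u)-Q$, and $G_w-\{H,P\}=(G_w-u)-\{H,Q\}$, the summand for $P$ equals $w(uz)^2$ times the summand for $Q$ in $G_w-u$. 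Applying the induction hypothesis to $\Phi_{G-u}(z,H)$ converts the $z\notin V(H)$ terms into exactly these path summands, which completes the identity.

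I expect the main obstacle to be bookkeeping rather than anything conceptual: one must keep the neighbours of $u$ inside and outside $H$ rigorously separate, checking in particular that the $z\in V(H)$ terms are exactly the length-one paths into $H$, which are invisible to the recurrence run on $G-u$, and that the vertex-set deletions compose correctly under the path bijection, e.g.\ $G_w-\{H,P\}=(G_w-u)-\{H,Q\}$. The clean cancellation of the $(x-w(u))$-terms is what makes the recurrence collapse onto $\Phi_{G-u}$, so it is worth verifying carefully that the two applications of \eqref{heli} are taken along the very same vertex $u$.
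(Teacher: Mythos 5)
Your proof is correct and follows essentially the same route as the paper's: both arguments expand $\mu(G_w,x)$ and $\mu(G_w-H,x)$ via the recurrence \eqref{heli} at the vertex $u$, cancel the $(x-w(u))$-terms, split the neighbours of $u$ according to membership in $V(H)$, identify the $z\in V(H)$ terms with the length-one paths, and apply the induction hypothesis to $G_w-u$ with root $z$ for the remaining terms. The only cosmetic difference is the induction variable ($|V(G)|$ versus $|V(G)\setminus(V(H)\cup\{u\})|$) and the phrasing of the base case, neither of which changes the substance.
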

This theorem in  \cite{HL} is stated for weighted graphs without weight on vertices. Also, in our notations, $(W(P))^{2}$ has been used instead of $W(P)$ in \cite[Theorem 6.3]{HL}.
The following proof is for our general case.

\begin{proof}
We prove the statement by induction on $V(G)-V(H)-\{u\}$.
For brevity, for any subgraph or vertex subset $S$ of $G_w$, we use this notation:
\[\mu_{\{S\}}:=\mu(G_w-S,x).\]
In the case that $H$ is empty or $V(G)=\{u\}$, it is easy to see that the statement is true. So, suppose that $H$ is nonempty.

From (\ref{heli}), we have:
{
\begin{equation}\label{hel1}
\mu(G_w,x)=(x-w(u))\mu(G_w -u,x)-\displaystyle\sum_{v\neq u}(w(uv))^{2}\mu(G_w-\{v,u\},x),
\end{equation}
\begin{equation}\label{hel2}
\mu(G_w-H,x)=(x-w(u))\mu_{\{u,H\}}-\displaystyle\sum_{\substack{v\neq u \\ v\notin V(H)}}(w(uv))^{2}\mu(G_w-\{v,u,H\},x).
\end{equation}
}
So, from (\ref{hel1}), we conclude
{
\begin{multline}\label{hel3}
\mu(G_w,x)\mu(G_w-\{u,H\},x)=
((x-w(u))\mu_{u}-\displaystyle\sum_{v\neq u}(w(uv))^{2}\mu_{\{v,u\}})\mu_{\{u,H\}}= \\
(x-w(u))\mu_{u}\mu_{\{u,H\}}-\displaystyle\sum_{\substack{v\neq u \\ v\notin V(H)}}(w(uv))^{2}\mu_{\{v,u\}}\mu_{\{u,H\}}-\displaystyle\sum_{v\in V(H)}(w(uv))^{2}\mu_{\{v,u\}}\mu_{\{u,H\}}.
\end{multline}
}
Also, from (\ref{hel2}), we have
{
\begin{equation}\label{hel4}
\mu(G_w-u,x)\mu(G_w-H,x)= 
(x-w(u))\mu_{u}\mu_{\{u,H\}}-\displaystyle\sum_{\substack{v\neq u \\ v\notin V(H)}}(w(uv))^{2}\mu_{u}\mu_{\{v,u,H\}}.
\end{equation}
}
By using (\ref{hel3}) and (\ref{hel4}), we have
{
\begin{multline}\label{indu}
\mu(G_w-u,x)\mu(G_w-H,x)-\mu(G_w,x)\mu(G_w-\{u,H\},x)=\\
\displaystyle\sum_{v\in V(H)}(w(uv))^{2}\mu_{\{v,u\}}\mu_{\{u,H\}}+\displaystyle\sum_{\substack{v\neq u \\ v\notin V(H)}}(w(uv))^{2}(\mu_{\{v,u\}}\mu_{\{u,H\}}-\mu_{u}\mu_{\{v,u,H\}}).
\end{multline}
}
Now, for the base case of induction, suppose that $V(G)=V(H)\cup\{u\}$. From (\ref{indu}) one can conclude
{
\[
\mu(G_w-u,x)\mu(G_w-H,x)-\mu(G_w,x)\mu(G_w-\{u,H\},x)=
\displaystyle\sum_{v\in V(H)}(w(uv))^{2}\mu_{\{v,u\}}\mu_{\{u,H\}}.
\]
}
Since every path from $u$ to $H$ is an edge such as $uv$ where $v\in V(H)$, we have: 
{
\begin{multline*}
\mu(G_w-u,x)\mu(G_w-H,x)-\mu(G_w,x)\mu(G_w-\{u,H\},x)=\\
\displaystyle\sum_{v\in V(H)}(w(uv))^{2}\mu_{\{v,u\}}\mu_{\{u,H\}}=
\sum_{P\in \mathcal{P}(u,H)}(W(P))^{2}\mu(G_w-P,x)\mu(G_w-\{H,P\},x).
\end{multline*}
}
Hence, the statement is true for the base case of induction. Therefore, for $V(H)\cup\{u\}\varsubsetneqq V(G)$, suppose that the statement is true for $G_w-u$ and an arbitrary vertex $v$,  $v\notin V(H)$, and we have
{
\[
\mu_{\{v,u\}}\mu_{\{u,H\}}-\mu_{u}\mu{\{v,u,H\}}=\sum_{\substack{P\in \mathcal{P}(v,H) \\ P\, in\, G_w-u }}(W(P))^{2}\mu(G_w-u-P,x)\mu(G_w-\{H,u,P\},x).
\]
}
So, by substituting the relations above, in (\ref{indu}) we have
\begin{multline*}
\displaystyle\sum_{v\in V(H)}(w(uv))^{2}\mu_{\{v,u\}}\mu_{\{u,H\}}+\\
\displaystyle\sum_{\substack{v\neq u \\ v\notin V(H)}}(w(uv))^{2}(\sum_{\substack{P\in \mathcal{P}(v,H) \\ P\, in \, G_w-u}}(W(P))^{2}\mu(G_w-u-P,x)\mu(G_w-\{H,u,P\},x))=\\
\sum_{P\in \mathcal{P}(u,H)}(W(P))^{2}\mu(G_w-P,x)(G_w-\{H,P\},x),
\end{multline*}
and hence the statement follows.

\end{proof} 

\section{Eigenvectors of Acyclic Matrices and Trees }

In this section, we give an explicit formula for the eigenvectors of acyclic matrices and weighted trees. 
When we say two indices $u$ and $v$ of $A$ are adjacent or there exists a path between them, we mean that in the graph associated to $A$, they are adjacent or  there exists a path between them, respectively.
\subsection{Simple Eigenvalue}
Now, in the following theorem, we give a formula for the eigenvectors of simple eigenvalues of  acyclic matrices.
\begin{theorem}\label{simpleeig}
Suppose that $n\in\mathbb{N}$ and $A$ is an $n\times n$ acyclic symmetric matrix. If  $\lambda$ is a simple eigenvalue of $A$ and $u$ is an index such that $\phi(A-u,\lambda)\neq 0$, then 
 $\boldsymbol{\alpha}$ is a $\lambda$-eigenvector, where
 \[
 \boldsymbol{\alpha}(v)=
 \begin{cases}
 W(P_{u,v})\phi(A-P_{u,v},\lambda) & \mathcal{P}(u,v)\neq \emptyset,\\
 0 & \text{otherwise}.
 \end{cases}
 \] 

\end{theorem}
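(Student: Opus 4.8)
The plan is to verify directly that $(\lambda\mathbb{I}-A)\boldsymbol{\alpha}=\mathbf{0}$, i.e.\ that the scalar identity
\[
(\lambda-w(v))\,\boldsymbol{\alpha}(v)=\sum_{z\sim v}w(vz)\,\boldsymbol{\alpha}(z)
\]
holds at every index $v$, where $w(v)=A_{vv}$ is the vertex weight, $w(vz)=A_{vz}$ the edge weight, and the sum runs over the neighbours $z$ of $v$. Since $\boldsymbol{\alpha}(u)=W(P_{u,u})\phi(A-u,\lambda)=\phi(A-u,\lambda)\neq 0$ by hypothesis, $\boldsymbol{\alpha}$ is nonzero, so establishing this identity will show it is a genuine $\lambda$-eigenvector. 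As $A$ is acyclic its graph $F$ is a forest; I would first dispose of every index $v$ in a component different from that of $u$, where $\mathcal{P}(u,v)=\emptyset$ forces $\boldsymbol{\alpha}(v)=0$ and likewise $\boldsymbol{\alpha}(z)=0$ for every neighbour $z$, so both sides vanish. It then remains to work inside the tree $T$ containing $u$, which I root at $u$ so that every other vertex $v$ has a well-defined parent $p(v)$ (its neighbour on $P_{u,v}$) and a set of children.

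For the root $v=u$, I apply the recurrence \eqref{heli} to $F$ at $u$, using $\phi=\mu$ (Theorem~\ref{matchchar}) and the fact that $\lambda$ is an eigenvalue, so $\phi(F,\lambda)=0$. Relation \eqref{heli} then gives $0=(\lambda-w(u))\phi(A-u,\lambda)-\sum_{z\sim u}(w(uz))^{2}\phi(A-\{u,z\},\lambda)$. Since for a neighbour $z$ of $u$ the path $P_{u,z}$ is the single edge $uz$, we have $\boldsymbol{\alpha}(z)=w(uz)\phi(A-\{u,z\},\lambda)$, and after substituting $\boldsymbol{\alpha}(u)=\phi(A-u,\lambda)$ this recurrence is precisely the eigen-equation at $u$.

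For a non-root vertex $v$ write $P:=P_{u,v}$ and $P':=P_{u,p(v)}=P\setminus\{v\}$. The essential bookkeeping is $W(P)=w(p(v)v)\,W(P')$ for the parent and $W(P\cup\{c\})=w(vc)\,W(P)$ for each child $c$; substituting the formula for $\boldsymbol{\alpha}$ at $v$, at $p(v)$, and at each child into the eigen-equation and cancelling the common factor $W(P)$ (nonzero, as all edge weights are nonzero) reduces the claim to
\[
(\lambda-w(v))\phi(A-P,\lambda)=\phi(A-P',\lambda)+\sum_{c}(w(vc))^{2}\,\phi\bigl(A-(P\cup\{c\}),\lambda\bigr),
\]
the sum being over the children $c$ of $v$. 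I obtain this by applying \eqref{heli} not to $F$ but to the subforest $A-P'$ at the vertex $v$: there $v$ survives, its neighbours are exactly its children $c$ (the parent having been removed together with $P'$), $(A-P')-v=A-P$, and $(A-P')-\{v,c\}=A-(P\cup\{c\})$, so \eqref{heli} yields precisely the displayed identity once $\mu$ is replaced by $\phi$ via Theorem~\ref{matchchar}.

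The only real care needed—and where the argument is most error-prone rather than deep—is the path-and-weight bookkeeping in this last step: confirming that deleting $P'=P_{u,p(v)}$ strips away exactly the parent edge, so that the neighbours of $v$ in $A-P'$ are its children, and tracking how $P_{u,\cdot}$ and $W(\cdot)$ change when one moves from $v$ to an adjacent vertex. Notably, the non-root identity is purely combinatorial, using only \eqref{heli} and $\phi=\mu$; the assumption that $\lambda$ is an eigenvalue enters solely through the root equation via $\phi(F,\lambda)=0$, while the hypothesis $\phi(A-u,\lambda)\neq 0$ is exactly what makes $\boldsymbol{\alpha}(u)\neq 0$, so that $\boldsymbol{\alpha}$ is a nonzero vector.
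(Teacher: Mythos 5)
Your proof is correct, but it takes a genuinely different and in fact lighter route than the paper's. The paper applies the recurrence \eqref{heli} at $v$ inside the \emph{whole} matrix $A$ and then needs the path-summation identity of Theorem~\ref{helis} twice --- once with $H=\{v\}$ and once with $H$ the edge $vz$ --- to convert $\phi(A-v,\lambda)$ and $(w(vz))^{2}\phi(A-\{v,z\},\lambda)$ into multiples of the path-deleted quantities $W(P_{u,v})\phi(A-P_{u,v},\lambda)$; this uses $\phi(A,\lambda)=0$ at every vertex and requires the generalized weighted version of Theorem~\ref{helis} that the paper proves by induction. You instead root the tree at $u$ and apply \eqref{heli} directly in the subforest $A-P_{u,p(v)}$, where the neighbours of $v$ are exactly its children and the required identity
\[
(\lambda-w(v))\phi(A-P_{u,v},\lambda)=\phi(A-P_{u,p(v)},\lambda)+\sum_{c}(w(vc))^{2}\phi(A-(P_{u,v}\cup\{c\}),\lambda)
\]
is a polynomial identity valid for all $x$, so the eigenvalue hypothesis enters only through the root equation. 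This bypasses Theorem~\ref{helis} entirely and cleanly isolates where $\phi(A,\lambda)=0$ is used, which is a genuine simplification. What the paper's heavier machinery buys in exchange is the intermediate relation $\phi(A-u,\lambda)\phi(A-v,\lambda)=(W(P_{u,v})\phi(A-P_{u,v},\lambda))^{2}$ (relation \eqref{uvphi}), which it then exploits to obtain the normalized eigenvector $\boldsymbol{\beta}$ and the sign/magnitude description $|\boldsymbol{\gamma}(v)|=\sqrt{|\phi(A-v,\lambda)|}$ in the theorem that follows; your argument would need a separate step to recover that. Your bookkeeping ($W(P_{u,v})=w(p(v)v)W(P_{u,p(v)})$, $W(P_{u,c})=w(vc)W(P_{u,v})$, and the cancellation of the nonzero factor $W(P_{u,v})$) checks out.
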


\begin{proof}
It is sufficient to prove the statement for the tree that contains $u$ in the forest.
Since $\boldsymbol{\alpha}(u)=\phi(A-u,\lambda)\neq 0$, the given vector $\boldsymbol{\alpha}$ is not zero. It is sufficient to show that for every index $v$, we have 
\begin{equation}\label{eigvectorrel}
(\lambda-w(v))\boldsymbol{\alpha}(v)=\sum_{z\sim v }w(vz)\boldsymbol{\alpha}(z).
\end{equation}

From (\ref{heli})  and Theorem \ref{matchchar}, by putting $x=\lambda$ and $G_w=A$, we conclude
\[
0=\phi(A,\lambda)=(\lambda-w(v))\phi(A-v,\lambda)-\displaystyle\sum_{z\neq v}(w(zv))^{2}\phi(A-\{v,z\},\lambda)
\]
and hence,
\begin{equation}\label{eigvectorrel1}
(\lambda-w(v))\phi(A-v,\lambda)=\displaystyle\sum_{z\sim v}(w(zv))^{2}\phi(A-\{v,z\},\lambda).
\end{equation}
Now, for two vertices $v$ and $u$, in Theorem \ref{helis}, put $G_w=A$, $x=\lambda$, and $H=\{v\}$. Therefore, we have
\[
\phi(A-u,\lambda)\phi(A-v,\lambda)-\phi(A,\lambda)\phi(A-\{u,v\},\lambda)=(W(P_{u,v}))^{2}\phi(A-P_{u,v},\lambda)\phi(A-P_{u,v},\lambda).\] 
Since  $\phi(A,\lambda)=0$,
\begin{equation}\label{eigvectorrel2}
\phi(A-v,\lambda)=(\frac{W(P_{u,v})\phi(A-P_{u,v},\lambda)}{\phi(A-u,\lambda)})W(P_{u,v})\phi(A-P_{u,v},\lambda).
\end{equation}
Now, for two arbitrary adjacent vertices $v$ and $z$, in Theorem \ref{helis}, put
\begin{center}
 $G_w=A$, $x=\lambda$ and
 $H=\text{(induced subgraph by $v$ and $z$)}=e(zv)$.
\end{center}
 
 We have two following cases:
 
\textbf{Case 1.} Suppose the only path in the forest corresponding to $A$, from $u$ to $H$ which intersects $H$ in a vertex, meets $H$ at $v$. So, we have
{ \[
\phi(A-u,\lambda)\phi(A-H,\lambda)-\phi(A,\lambda)\phi(A-\{u,H\},\lambda)=\]\[(W(P_{u,v}))^{2}\phi(A-P_{u,v},\lambda)\phi(A-\{H,P_{u,v}\},\lambda).\] }
Since $\phi(A,\lambda)=0$ and $A-\{H,P_{u,v}\}=A-P_{u,z}$, we have
\[
\phi(A-u,\lambda)\phi(A-\{v,z\},\lambda)=(W(P_{u,v}))^{2}\phi(A-P_{u,v},\lambda)\phi(A-P_{u,z},\lambda),
\]
and we conclude that
\[
\phi(A-\{v,z\},\lambda)=(\frac{W(P_{u,v})\phi(A-P_{u,v},\lambda)}{\phi(A-u,\lambda)})W(P_{u,v})\phi(A-P_{u,z},\lambda),
\]
and because $w(zv)W(P_{u,v})=W(P_{u,z})$,
\[
(w(zv))^{2}\phi(A-\{v,z\},\lambda)=(\frac{W(P_{u,v})\phi(A-P_{u,v},\lambda)}{\phi(A-u,\lambda)})w(zv)(W(P_{u,z})\phi(A-P_{u,z},\lambda)).\]

\textbf{Case 2.} Suppose the only path in the forest corresponding to $A$, from $u$ to $H$ which intersects $H$ in a vertex, meets $H$ at $z$. So, we have
{ \[
\phi(A-u,\lambda)\phi(A-H,\lambda)-\phi(A,\lambda)\phi(A-\{u,H\},\lambda)=\]\[(W(P_{u,z}))^{2}\phi(A-P_{u,z},\lambda)\phi(A-\{H,P_{u,z}\},\lambda).\] }
Since $\phi(A,\lambda)=0$ and $A-\{H,P_{u,z}\}=A-P_{u,v}$, we have
\[
\phi(A-u,\lambda)\phi(A-\{v,z\},\lambda)=(W(P_{u,z}))^{2}\phi(A-P_{u,z},\lambda)\phi(A-P_{u,v},\lambda),
\]
and because $W(P_{u,z})=\frac{W(P_{u,v})}{w(zv)}$,
\[
\phi(A-\{v,z\},\lambda)=(\frac{W(P_{u,v})\phi(A-P_{u,v},\lambda)}{w(zv)\phi(A-u,\lambda)})W(P_{u,z})\phi(A-P_{u,z},\lambda),
\]
and hence,
\[
(w(zv))^{2}\phi(A-\{v,z\},\lambda)=(\frac{W(P_{u,v})\phi(A-P_{u,v},\lambda)}{\phi(A-u,\lambda)})w(zv)(W(P_{u,z})\phi(A-P_{u,z},\lambda)).\]
Therefore, in both cases we have
\begin{equation}\label{eigvectorrel3}
(w(zv))^{2}\phi(A-\{v,z\},\lambda)=(\frac{W(P_{u,v})\phi(A-P_{u,v},\lambda)}{\phi(A-u,\lambda)})w(zv)(W(P_{u,z})\phi(A-P_{u,z},\lambda)).
\end{equation}
By putting (\ref{eigvectorrel2}) and (\ref{eigvectorrel3}) in (\ref{eigvectorrel1}), we have
\begin{align*}
(\lambda-w(v))(\frac{W(P_{u,v})\phi(A-P_{u,v},\lambda)}{\phi(A-u,\lambda)})W(P_{u,v})\phi(A-P_{u,v},\lambda)=\\
\displaystyle\sum_{z\sim v}(\frac{W(P_{u,v})\phi(A-P_{u,v},\lambda)}{\phi(A-u,\lambda)})w(zv)(W(P_{u,z})\phi(A-P_{u,z},\lambda))
\end{align*}
and by simplifying relation above, one can conclude 
\[
(\lambda-w(v))W(P_{u,v})\phi(A-P_{u,v},\lambda)=\displaystyle\sum_{z\sim v}w(zv)(W(P_{u,z})\phi(A-P_{u,z},\lambda)).
\]
Therefore, we obtain the relation (\ref{eigvectorrel}) and the proof is complete.
\end{proof}

\begin{remark}\label{pendant}
\textbf{Pendant vertices and star sets}:
The index $u$ in Theorem \ref{simpleeig}, concerning Lemma \ref{starsetsimple}, is a member of a star set. By theorems of star sets or by Theorem \ref{relationsheli} we conclude that there exists always such index.
One of the problems on star sets is finding a star set. In the case of weighted forests (with at least two vertices) and a simple eigenvalue, since the corresponding eigenvector is nonzero on at least two pendant vertices, the pendant vertices are good choices to find a star set.
\end{remark}
In the relation (\ref{eigvectorrel2}) for the root $\lambda$ we have
\begin{equation}\label{uvphi}
\phi(A-u,\lambda)\phi(A-v,\lambda)=(W(P_{u,v})\phi(A-P_{u,v},\lambda))^{2}.
\end{equation}
Thus, for every vertex $v$, $\phi(A-v,\lambda)$ is zero or has the same sign with $\phi(A-u,\lambda)$. 
Therefore, we have
\[
\sqrt{|\phi(A-u,\lambda)|}\sqrt{|\phi(A-v,\lambda)|}=|W(P_{u,v})\phi(A-P_{u,v},\lambda)|.
\]
By summing relation (\ref{uvphi}) over all vertices, we have
\[
\sum_{v}(W(P_{u,v})\phi(A-P_{u,v},\lambda))^{2}=\phi(A-u,\lambda)\sum_{v}\phi(A-v,\lambda)=\phi(A-u,\lambda)\phi^{'}(A,\lambda).\] 
The last equality is obtained from Theorem \ref{relationsheli}. Therefore, we have the following theorem.
\begin{theorem}
Let $n\in\mathbb{N}$ and $A\in \text{\rm Sym}_{n}(\mathbb{R})$ be an acyclic matrix with a simple eigenvalue $\lambda$. If $u$ is an index of $A$ such that $\phi(A-u,\lambda)\neq 0$, then there exist a unit  $\lambda$-eigenvector $\boldsymbol{\beta}$ and  a   $\lambda$-eigenvector $\boldsymbol{\gamma}$ such that for every $v\in [n]$ 
 \[
 \boldsymbol{\beta}(v)=
 \begin{cases}
\frac{W(P_{u,v})\phi(A-P_{u,v},\lambda)}{\sqrt{\phi(A-u,\lambda)\phi^{'}(A,\lambda)}}  & \mathcal{P}(u,v)\neq \emptyset,\\
 0 & \text{otherwise},
 \end{cases}\text{ and }
|\boldsymbol{\gamma}(v)|=
\sqrt{|\phi(A-v,\lambda)|}.
 \]
\end{theorem}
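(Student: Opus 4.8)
The plan is to build both eigenvectors as scalar rescalings of the eigenvector $\boldsymbol{\alpha}$ produced by Theorem \ref{simpleeig}, so that each is automatically a $\lambda$-eigenvector and only the normalizing constant and the sign bookkeeping remain to be checked. Recall that $\boldsymbol{\alpha}(v)=W(P_{u,v})\phi(A-P_{u,v},\lambda)$ when $\mathcal{P}(u,v)\neq\emptyset$ and $\boldsymbol{\alpha}(v)=0$ otherwise; since $\boldsymbol{\alpha}(u)=\phi(A-u,\lambda)\neq0$, this vector is nonzero.

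First I would compute $\|\boldsymbol{\alpha}\|^{2}$. Squaring the defining formula and summing over all $v$ gives $\|\boldsymbol{\alpha}\|^{2}=\sum_v (W(P_{u,v})\phi(A-P_{u,v},\lambda))^{2}$, where the terms with $\mathcal{P}(u,v)=\emptyset$ contribute $0$. By the identity (\ref{uvphi}) each such term equals $\phi(A-u,\lambda)\phi(A-v,\lambda)$, and then the derivative relation of Theorem \ref{relationsheli}(i) collapses the sum to $\phi(A-u,\lambda)\phi'(A,\lambda)$. Since $\boldsymbol{\alpha}\neq0$ we have $\|\boldsymbol{\alpha}\|^{2}>0$, so the product $\phi(A-u,\lambda)\phi'(A,\lambda)$ is strictly positive and its square root is a genuine positive real; setting $\boldsymbol{\beta}=\boldsymbol{\alpha}/\|\boldsymbol{\alpha}\|$ then yields precisely the displayed formula for $\boldsymbol{\beta}$, a unit $\lambda$-eigenvector.

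For $\boldsymbol{\gamma}$ I would rescale instead by $\sqrt{|\phi(A-u,\lambda)|}$, that is, $\boldsymbol{\gamma}=\boldsymbol{\alpha}/\sqrt{|\phi(A-u,\lambda)|}$, which is again a $\lambda$-eigenvector. Taking absolute values in (\ref{uvphi}) gives $|\boldsymbol{\alpha}(v)|=\sqrt{|\phi(A-u,\lambda)|}\,\sqrt{|\phi(A-v,\lambda)|}$ for $v$ in the tree containing $u$, so that $|\boldsymbol{\gamma}(v)|=\sqrt{|\phi(A-v,\lambda)|}$ there. For $v$ lying in a different component, $\mathcal{P}(u,v)=\emptyset$, and applying Theorem \ref{helis} with $H=\{v\}$ (whose right-hand side is then empty) together with $\phi(A,\lambda)=0$ forces $\phi(A-u,\lambda)\phi(A-v,\lambda)=0$, hence $\phi(A-v,\lambda)=0$; this matches $\boldsymbol{\gamma}(v)=0$, so the identity $|\boldsymbol{\gamma}(v)|=\sqrt{|\phi(A-v,\lambda)|}$ holds for every $v\in[n]$.

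The computations are routine once (\ref{uvphi}) and Theorem \ref{relationsheli}(i) are in hand; the only genuinely delicate point is the well-definedness of the square roots, which is exactly the positivity $\phi(A-u,\lambda)\phi'(A,\lambda)>0$. I expect this sign issue --- and the implicit fact that every summand $\phi(A-u,\lambda)\phi(A-v,\lambda)$ is nonnegative, so that $\phi(A-v,\lambda)$ never opposes the sign of $\phi(A-u,\lambda)$ --- to be the main thing to get right, and it is guaranteed precisely because each such summand is a perfect square by (\ref{uvphi}).
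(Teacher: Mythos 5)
Your proposal is correct and follows essentially the same route as the paper: it uses the eigenvector $\boldsymbol{\alpha}$ from Theorem \ref{simpleeig}, the perfect-square identity (\ref{uvphi}) for sign consistency and for $\boldsymbol{\gamma}$, and Theorem \ref{relationsheli}(i) to evaluate $\|\boldsymbol{\alpha}\|^{2}=\phi(A-u,\lambda)\phi'(A,\lambda)$ for the normalization of $\boldsymbol{\beta}$. Your explicit treatment of vertices in components not containing $u$ (via the empty path sum in Theorem \ref{helis}) is a detail the paper leaves implicit, but it is the same argument.
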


\begin{example}\label{examvector}
Suppose that $A$ is the following acyclic symmetric matrix and $\lambda$ is a simple eigenvalue of $A$. 
If for the vertex $1$ with weight $w_1$, we have $\phi(A-\{1\},\lambda)\neq0$, namely $1$ is a member of a star set,
then for the array  of the $\lambda$-eigenvector $\boldsymbol{\alpha}$ corresponding to the vertex $3$, we remove the path from $1$ to $3$ and we have the graph in Figure \ref{exampath13}. Hence, by Theorem \ref{simpleeig} we have
\[\boldsymbol{\alpha}(3)=W(P_{1,3})\phi(A-P_{1,3},\lambda)=fg\phi(A-P_{1,3},\lambda)=fg|\lambda\mathbb{I}-(A-P_{1,3})|.\]
{
\[
A=\begin{pmatrix}
w_1 & f & 0 & 0 & 0 & 0 & 0 & 0 & 0 & n \\ 
f &  w_2&g& 0 & 0 & j & 0 & 0 & 0 & 0 \\ 
0 & g & w_3&h  & 0 & 0 & 0 & 0 & 0 & 0 \\ 
0 & 0 & h & w_4 & i & 0 & 0 & 0 & 0 & 0\\
0 & 0 & 0 & i & w_5& 0 & 0 & 0 & 0 & 0 \\ 
0 & j & 0 & 0 & 0 &w_6 &k & 0 & m & 0 \\ 
0 & 0 & 0 & 0 & 0 & k &w_7& l & 0 & 0 \\ 
0 & 0 & 0 & 0 & 0 & 0 & l &w_8 & 0 & 0 \\ 
0 & 0 & 0 & 0 & 0 & m & 0 & 0 & w_9 & 0 \\ 
n & 0 & 0 & 0 & 0 & 0 & 0 & 0 & 0 & w_{10} 
\end{pmatrix} ,\,\,
A-P_{1,3}=\left(\begin{array}{cc|cccc|c}
 w_4 & i & 0 & 0 & 0 & 0 & 0\\
 i & w_5& 0 & 0 & 0 & 0 & 0 \\\cline{1-7} 
 0 & 0 &w_6 &k & 0 & m & 0 \\ 
 0 & 0 & k &w_7& l & 0 & 0 \\ 
 0 & 0 & 0 & l &w_8 & 0 & 0 \\ 
 0 & 0 & m & 0 & 0 & w_9 & 0 \\ \cline{1-7}
 0 & 0 & 0 & 0 & 0 & 0 & w_{10} 
\end{array}\right) 
\]

}
\begin{figure}[H]
\centering
\begin{tikzpicture}[scale=.8,line width=.8 pt ,line cap=round,line join=round,>=triangle 45,x=1cm,y=1.0cm]
\draw (1.7,5)-- (0.5,4.);
\draw  (0.5,4.)-- (-1.6,2.4);
\draw (-1.6,2.4)-- (-1.6,0.6);
\draw (-1.6,0.6)-- (-1.6,-1.1);
\draw (0.5,4.)-- (1.7,2.4);
\draw (1.7,2.4)-- (1.7,0.5);
\draw (1.7,0.5)-- (1.7,-1.1);
\draw (1.7,2.4)-- (4.,2.4);
\draw  (1.7,5)-- (4,5);
\begin{scriptsize}
\draw [fill=red] (1.7,5) circle (3.pt);
\draw[color=blue] (1.6,5.4) node {$w_1$};
\draw [fill=blue] (0.5,4.) circle (1.5pt);
\draw[color=blue] (0.38,4.36) node {$w_2$};
\draw[color=black] (0.8,4.7) node {$f$};
\draw [fill=blue] (-1.6,2.4) circle (1.5pt);
\draw [] (-1.6,2.4) circle (3.pt);
\draw[color=blue] (-1.44,2.7) node {$w_3$};
\draw[color=black] (-0.84,3.58) node {$g$};
\draw [fill=blue] (-1.6,0.6) circle (1.5pt);
\draw[color=blue] (-1.4,0.96) node {$w_4$};
\draw[color=black] (-1.88,1.66) node {$h$};
\draw [fill=blue] (-1.6,-1.1) circle (1.5pt);
\draw[color=blue] (-1.4,-0.76) node {$w_5$};
\draw[color=black] (-1.92,-0.08) node {$i$};
\draw [fill=blue] (1.7,2.4) circle (1.5pt);
\draw[color=blue] (1.86,2.6) node {$w_6$};
\draw[color=black] (0.8,3.22) node {$j$};
\draw [fill=blue] (1.7,0.54) circle (1.5pt);
\draw[color=blue] (1.94,0.9) node {$w_7$};
\draw[color=black] (1.5,1.7) node {$k$};
\draw [fill=blue] (1.7,-1.1) circle (1.5pt);
\draw[color=blue] (2.02,-0.84) node {$w_8$};
\draw[color=black] (1.58,-0.18) node {$l$};
\draw [fill=blue] (4.,2.4) circle (1.5pt);
\draw[color=blue] (4.3,2.6) node {$w_9$};
\draw[color=black] (3.,2.3) node {$m$};
\draw [fill=blue] (4,5.) circle (1.5pt);
\draw[color=blue] (4.1,5.4) node {$w_{10}$};
\draw[color=black] (2.8,4.86) node {$n$};
\end{scriptsize}

\draw[] (5,4) node {$\Longrightarrow$};

\draw[style=dashed] (8.7,5)-- (7.5,4.);
\draw [style=dashed]  (7.5,4.)-- (5.4,2.4);
\draw [style=dotted](5.4,2.4)-- (5.4,0.6);
\draw (5.4,0.6)-- (5.4,-1.1);
\draw [style=dotted] (7.5,4.)-- (8.7,2.4);
\draw (8.7,2.4)-- (8.7,0.5);
\draw (8.7,0.5)-- (8.7,-1.1);
\draw (8.7,2.4)-- (11,2.4);
\draw [style=dotted] (8.7,5)-- (11,5);
\begin{scriptsize}
\draw [color=red] (8.7,5) circle (3.pt);
\draw[color=blue] (8.6,5.4) node {$w_1$};
\draw [] (7.5,4.) circle (1.5pt);
\draw[color=blue] (7.38,4.36) node {$w_2$};
\draw[color=black] (7.8,4.7) node {$f$};
\draw [color=blue] (5.4,2.4) circle (1.5pt);
\draw [] (5.4,2.4) circle (3.pt);
\draw[color=blue] (5.56,2.7) node {$w_3$};
\draw[color=black] (6.16,3.58) node {$g$};
\draw [fill=blue] (5.4,0.6) circle (1.5pt);
\draw[color=blue] (5.6,0.96) node {$w_4$};
\draw[color=black] (5.1,1.66) node {$h$};
\draw [fill=blue] (5.4,-1.1) circle (1.5pt);
\draw[color=blue] (5.6,-0.76) node {$w_5$};
\draw[color=black] (5.1,-0.08) node {$i$};
\draw [fill=blue] (8.7,2.4) circle (1.5pt);
\draw[color=blue] (8.86,2.6) node {$w_6$};
\draw[color=black] (7.8,3.22) node {$j$};
\draw [fill=blue] (8.7,0.54) circle (1.5pt);
\draw[color=blue] (8.94,0.9) node {$w_7$};
\draw[color=black] (8.5,1.7) node {$k$};
\draw [fill=blue] (8.7,-1.1) circle (1.5pt);
\draw[color=blue] (9.02,-0.84) node {$w_8$};
\draw[color=black] (8.58,-0.18) node {$l$};
\draw [fill=blue] (11.,2.4) circle (1.5pt);
\draw[color=blue] (11.3,2.6) node {$w_9$};
\draw[color=black] (10,2.3) node {$m$};
\draw [fill=blue] (11,5.) circle (1.5pt);
\draw[color=blue] (11.1,5.4) node {$w_{10}$};
\draw[color=black] (9.8,4.86) node {$n$};
\end{scriptsize}
\end{tikzpicture}
\caption{Trees of Example \ref{examvector}}
\label{exampath13}
\end{figure}
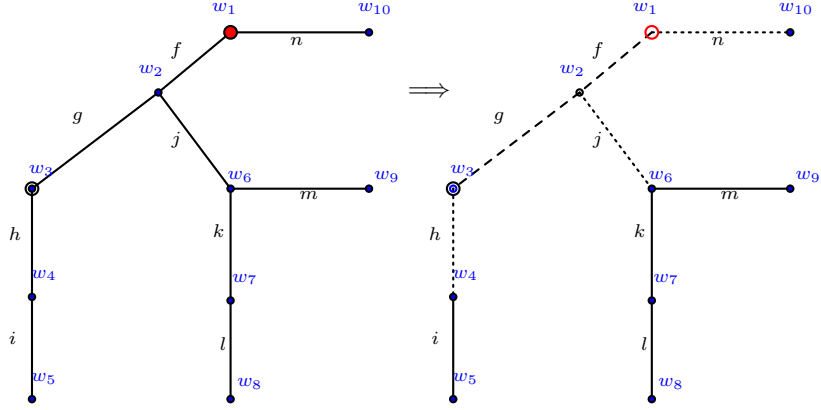
\end{example}

Before the next examples, we state the following theorem for path graphs.

\begin{theorem}[\textbf{Path Eigenvectors} ]{\rm \cite[Section 1.4]{BH}}\label{pathvec}
Let $n\in\mathbb{N}$. For the path $P_{n}$ we have:
\begin{enumerate}[i.]
\item
Adjacency: For every $k\in [n]$, $\lambda_{k}=\displaystyle 2\cos (\frac{k\pi}{n+1})$   is an adjacency eigenvalue of $P_n$ with corresponding eigenvector $\boldsymbol{v}_k$ where for each vertex $i\in [n]$, $\boldsymbol{v}_{k}(i)=\sin(\frac{ki\pi}{n+1})$.
\item
Laplacian: For every $k\in [n]$,  $\theta_{k}=\displaystyle 4\cos^{2}(\frac{k\pi}{2n})$ is a Laplacian eigenvalue of $P_n$ with corresponding eigenvector $\boldsymbol{v}_k$ where for each vertex $i\in [n]$, $\boldsymbol{v}_{k}(i)=\cos(\frac{(n-k)(2i-1)\pi}{2n})$.
\end{enumerate}
\end{theorem}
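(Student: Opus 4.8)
The plan is to prove both parts by the same elementary strategy: write the eigenvalue equation of the relevant matrix row by row and verify directly that the proposed trigonometric vectors are eigenvectors. Since $P_n$ is a tree, the eigenvectors could in principle be extracted from Theorem \ref{simpleeig} (all eigenvalues of $P_n$, adjacency or Laplacian, are simple), but that route would present them as products of matching polynomials of the two paths left after deleting a subpath, and reconciling those with the explicit sine and cosine formulas is more laborious than a direct check. So I would argue directly. In each part the $n$ scalars $\lambda_k$ (resp. $\theta_k$), $k\in[n]$, are pairwise distinct, so once each is shown to be an eigenvalue with the stated eigenvector, these account for the entire spectrum.

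For part (i), label the vertices $1,\dots,n$ consecutively along $P_n$, so that for the adjacency matrix $A$ of $P_n$ the equation $A\boldsymbol{v}=\lambda\boldsymbol{v}$ reads $\boldsymbol{v}(i-1)+\boldsymbol{v}(i+1)=\lambda\,\boldsymbol{v}(i)$ at an interior vertex, with endpoint rows $\boldsymbol{v}(2)=\lambda\,\boldsymbol{v}(1)$ and $\boldsymbol{v}(n-1)=\lambda\,\boldsymbol{v}(n)$. I would fold the two endpoint rows into the single interior recurrence by introducing the ghost values $\boldsymbol{v}(0)=\boldsymbol{v}(n+1)=0$. Writing $\varphi_k:=\tfrac{k\pi}{n+1}$ and $\boldsymbol{v}_k(i)=\sin(i\varphi_k)$, the interior equation then follows from the single identity $\sin((i-1)\varphi)+\sin((i+1)\varphi)=2\cos\varphi\,\sin(i\varphi)$, which yields eigenvalue $2\cos\varphi_k=\lambda_k$; and the ghost conditions hold because $\sin 0=0$ and $\sin((n+1)\varphi_k)=\sin(k\pi)=0$. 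Since $\boldsymbol{v}_k\neq\boldsymbol{0}$, this finishes (i).

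For part (ii) the interior rows of $L\boldsymbol{v}=\theta\boldsymbol{v}$, with $L=D-A$ and $D$ the degree matrix, read $2\boldsymbol{v}(i)-\boldsymbol{v}(i-1)-\boldsymbol{v}(i+1)=\theta\,\boldsymbol{v}(i)$, but the endpoints now have degree $1$, so the endpoint rows become $\boldsymbol{v}(1)-\boldsymbol{v}(2)=\theta\,\boldsymbol{v}(1)$ and $\boldsymbol{v}(n)-\boldsymbol{v}(n-1)=\theta\,\boldsymbol{v}(n)$. Setting $\psi_k:=\tfrac{(n-k)\pi}{2n}$, so that $\cos\tfrac{k\pi}{2n}=\sin\psi_k$ and hence $\theta_k=4\cos^2\tfrac{k\pi}{2n}=4\sin^2\psi_k$, the proposed eigenvector is $\boldsymbol{v}_k(i)=\cos((2i-1)\psi_k)$. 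The interior equation reduces, as before, to a product-to-sum identity. The endpoint rows are the crux: at $i=1$ one must verify $\cos\psi_k-\cos 3\psi_k=4\sin^2\psi_k\cos\psi_k$, which follows from $\cos\psi-\cos 3\psi=2\sin 2\psi\,\sin\psi=4\sin^2\psi\cos\psi$; at $i=n$ the same type of identity is used after evaluating $\cos((2n-1)\psi_k)$ and $\sin((2n-2)\psi_k)$ via $2n\psi_k=(n-k)\pi$, whereupon a common factor $(-1)^{n-k}$ appears on both sides and cancels.

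The main obstacle is exactly this endpoint verification in (ii). In the adjacency case the Dirichlet ghost values $\boldsymbol{v}(0)=\boldsymbol{v}(n+1)=0$ make the boundary automatic, whereas for the Laplacian the reduced degree at the two endpoints imposes a reflecting, Neumann-type condition, and it is precisely this condition that forces the half-integer phase $(2i-1)$ in the cosine together with the reindexing $k\mapsto n-k$. Keeping the phase bookkeeping straight—most delicately the sign $(-1)^{n-k}$ arising at the vertex $n$—is where care is required; by contrast the interior recurrences, and the distinctness of the $n$ eigenvalues that upgrades these eigenpairs to the full spectrum, are routine.
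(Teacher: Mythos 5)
Your verification is correct in every detail. Note, however, that the paper itself offers no proof of this statement: it is quoted verbatim from Brouwer--Haemers as a known classical fact and used only to generate the trigonometric identities in the two examples that follow. So there is no "paper proof" to compare against; what you have supplied is a self-contained substitute. Your argument is sound: the interior rows reduce to the product-to-sum identities $\sin((i-1)\varphi)+\sin((i+1)\varphi)=2\cos\varphi\,\sin(i\varphi)$ and $\cos((2i-3)\psi)+\cos((2i+1)\psi)=2\cos(2\psi)\cos((2i-1)\psi)$; the Dirichlet ghost values dispose of the adjacency endpoints; and your handling of the Laplacian endpoint at $i=n$ is right, since $2n\psi_k=(n-k)\pi$ gives $\cos((2n-1)\psi_k)=(-1)^{n-k}\cos\psi_k$ and $\sin((2n-2)\psi_k)=-(-1)^{n-k}\sin(2\psi_k)$, so the factor $(-1)^{n-k}$ indeed cancels from both sides of $\boldsymbol{v}(n)-\boldsymbol{v}(n-1)=\theta_k\boldsymbol{v}(n)$. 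Two small points worth making explicit: the vectors are nonzero because $\sin(k\pi/(n+1))\neq 0$ and $\cos((n-k)\pi/(2n))>0$ for $k\in[n]$; and the pairwise distinctness of the $\lambda_k$ (cosine injective on $(0,\pi)$) and of the $\theta_k$ (cosine injective and nonnegative on $(0,\pi/2]$) is what lets you conclude these $n$ eigenpairs exhaust the spectrum, as you say. Your remark that one could instead derive these vectors from Theorem \ref{simpleeig} is exactly the point of the paper's subsequent examples, which run that comparison in reverse to extract trigonometric identities; your direct check is the more economical way to establish the theorem itself.
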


\begin{example}[\textbf{Eigenvectors of path}]
Let $n\in \mathbb{N}$. We know, from Theorem \ref{pathvec}, that the eigenvalues of $P_{n}$ are
$\lambda_{k}=\displaystyle 2\cos (\frac{k\pi}{n+1})$ for $k\in [n]$ and
$\displaystyle \phi(P_{n},x)=\prod_{k=1}^{n}(x-\displaystyle 2\cos (\frac{k\pi}{n+1}))$.
Since every eigenvector of $P_{n}$ is nonzero on each pendant vertex of the path, (see Remark \ref{pendant}) we choose a pendant vertex as the root and by Theorem  \ref{simpleeig} for the  $\lambda_{k}$-eigenvector $\boldsymbol{\alpha}_{k}$ we have
\[
 \boldsymbol{\alpha}_{k}(i)=\phi(P_{n-i},\lambda_{k})=\prod_{j=1}^{n-i}(\displaystyle 2\cos (\frac{k\pi}{n+1})-\displaystyle 2\cos (\frac{j\pi}{n-i+1})),\,\, i\in[n].
\]
It is obvious that  $ \boldsymbol{\alpha}_{k}(n)=1$.
By comparing with Theorem \ref{pathvec}, we have the following trigonometric identity:
\[\frac{\boldsymbol{\alpha}_{k}(i)}{ \boldsymbol{\alpha}_{k}(n)}=\prod_{j=1}^{n-i}(\displaystyle 2\cos (\frac{k\pi}{n+1})-\displaystyle 2\cos (\frac{j\pi}{n-i+1}))=\displaystyle \frac{\sin (\frac{ki\pi}{n+1})}{\sin (\frac{kn\pi}{n+1})},\,\,i,k\in [n],\]
or equivalently:
\[\prod_{j=1}^{i}(\displaystyle \cos (\frac{k\pi}{n+1})-\displaystyle \cos (\frac{j\pi}{i+1}))=\displaystyle \frac{\sin (\frac{k(i+1)\pi}{n+1})}{2^{i}\sin (\frac{k\pi}{n+1})},\,\,0\leq i\leq n-1,\,k\in [n].\]

\begin{figure}[H]
\centering
\definecolor{qqqqff}{rgb}{0.,0.,1.}
\begin{tikzpicture}[scale=1,line cap=round,line join=round,>=triangle 45,x=1.0cm,y=1.0cm]
\draw (5.0,2)-- (4.1,2);
\draw [style=dotted] (3.2,2.)-- (4.1,2);
\draw [style=dotted](3.2,2.)-- (2.2,2);
\draw (1.4,2)-- (2.2,2);
\begin{scriptsize}
\draw[color=red] (5,2.3) node {$1$};
\draw [color=red] (5.0,2.) circle (3.pt);
\draw [fill=red] (5.0,2.) circle (2.pt);
\draw[color=blue] (4.1,2.3) node {$2$};
\draw [fill=qqqqff] (4.1,2.) circle (2.pt);
\draw[color=blue] (3.2,2.3) node {$i$};
\draw [color=qqqqff] (3.2,2.) circle (3.pt);
\draw [fill=qqqqff] (3.2,2.) circle (2.pt);
\draw[color=blue] (2.2,2.3) node {$n-1$};
\draw [fill=qqqqff] (2.2,2) circle (2.pt);
\draw[color=blue] (1.4,2.3) node {$n$};
\draw [fill=qqqqff] (1.4,2) circle (2.pt);
\end{scriptsize}
\end{tikzpicture}
\caption{Path $P_{n}$ }
\end{figure}
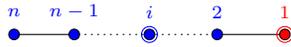 
\end{example}

\begin{example}[\textbf{Laplacian eigenvectors of path}]
By Theorem \ref{pathvec}, the Laplacian eigenvalues of $P_{n}$ are
$\theta_{k}=\displaystyle 4\cos^{2}(\frac{k\pi}{2n})$ for $k\in [n]$.
We choose a pendant vertex as the root and by Theorem  \ref{simpleeig} for the  $\theta_{k}$-eigenvector $\boldsymbol{v}_{k}$ we have
\[
 \boldsymbol{v}_{k}(i)=(-1)^{i-1}\phi(L(P_{n})-\{1,\ldots,i\},\theta_{k}),\,\, i\in[n].
\]
By comparing with Theorem \ref{pathvec}, we have the following identity for $i,k\in [n]$:
\[\frac{\boldsymbol{v}_{k}(i)}{ \boldsymbol{v}_{k}(n)}=
\frac{(-1)^{i-1}\left|\begin{array}{ccccc}
\theta_{k}-2 & 1 & \cdots & 0 & 0\\
 1& \theta_{k}-2& \cdots & 0 & 0 \\
 \vdots & \vdots &\ddots & \vdots & \vdots \\ 
 0 & 0 & \cdots &\theta_{k}-2& 1 \\ 
 0 & 0 & \cdots & 1 &\theta_{k}-1 
\end{array}\right|_{(n-i)\times (n-i)}}{(-1)^{n-1}}=\]
\[(-1)^{i-n}\left|\begin{array}{ccccc}
\displaystyle 2\cos (\frac{k\pi}{n}) & 1 &  \cdots & 0 & 0\\
 1& \displaystyle 2\cos (\frac{k\pi}{n})&  \cdots & 0 & 0 \\
 \vdots & \vdots &\ddots & \vdots & \vdots \\ 
 0 & 0 &  \cdots &\displaystyle 2\cos (\frac{k\pi}{n})& 1 \\ 
 0 & 0 &  \cdots & 1 &\displaystyle 2\cos (\frac{k\pi}{n})+1 
\end{array}\right|=
\displaystyle \frac{\cos(\frac{(n-k)(2i-1)\pi}{2n})}{\cos(\frac{(n-k)(2n-1)\pi}{2n})}.\]
Hence, 
\[\left|\begin{array}{ccccc}
\displaystyle 2\cos (\frac{k\pi}{n}) & 1 &  \cdots & 0 & 0\\
 1& \displaystyle 2\cos (\frac{k\pi}{n})&  \cdots & 0 & 0 \\
 \vdots & \vdots &\ddots & \vdots & \vdots \\ 
 0 & 0 &  \cdots &\displaystyle 2\cos (\frac{k\pi}{n})& 1 \\ 
 0 & 0 &  \cdots & 1 &\displaystyle 2\cos (\frac{k\pi}{n})+1 
\end{array}\right|=
\displaystyle \frac{\sin(\frac{k(2i-1)\pi}{2n})}{\sin(\frac{k(2n-1)\pi}{2n})}.\]
\end{example}

\subsection{Multiple Eigenvalue}
In the following theorem, for multiple eigenvalues of an acyclic symmetric matrix, we give a formula for their corresponding eigenvectors.
\begin{theorem}\label{multipleeig}
Let $n\in\mathbb{N}$ and $A$ be an $n\times n$ acyclic symmetric matrix. Suppose that  $\lambda$ is an eigenvalue of $A$ with multiplicity $k$ and $\lambda$-star set $U=\{u_{1},\ldots,u_{k}\}$. If $U_{i}=U\setminus \{u_i\}$ and $A_i$ is the component of  $A-U_{i}$ that contains $u_i$, for $i\in [k]$,  then 
 $\{\boldsymbol{\alpha}_{i}\}_{i=1}^{k}$ are $k$ independent $\lambda$-eigenvectors of $A$, where 
 \[
 \boldsymbol{\alpha}_{i}(v)=
 \begin{cases}
 W(P_{u_{i},v})\phi(A_{i},\lambda) & v\text{ is in }A_{i},\\
 0 & \text{otherwise}.
 \end{cases}
 \]
\end{theorem}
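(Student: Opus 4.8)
The plan is to reduce the multiple-eigenvalue case to the simple case already settled in Theorem \ref{simpleeig}, applied to the smaller matrix $A_i$, and then to \emph{glue} the resulting local eigenvector into a genuine eigenvector of $A$. (For $\boldsymbol{\alpha}_i$ to be nonzero the formula must read $\boldsymbol{\alpha}_i(v)=W(P_{u_i,v})\phi(A_i-P_{u_i,v},\lambda)$; written with $\phi(A_i,\lambda)$ it vanishes identically, since $\lambda$ will turn out to be a root of $\phi(A_i,\cdot)$. So $\boldsymbol{\alpha}_i|_{A_i}$ is exactly the vector produced by Theorem \ref{simpleeig} for $A_i$.) First I would pin down the spectral role of $A_i$. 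By Cauchy interlacing each single-index deletion changes the multiplicity of $\lambda$ by at most $1$, and deleting all of $U$ brings it from $k$ to $0$; hence $\operatorname{mult}(\lambda,A-S)=k-|S|$ for every $S\subseteq U$, so $\lambda$ is a \emph{simple} eigenvalue of $A-U_i$. Since $A-U_i$ is a forest, $\lambda$ lives on exactly one component, and it must be $A_i$: were it another component $E$, then $E$ would survive the further deletion of $u_i$, forcing $\lambda\in\operatorname{spec}(A-U)$ and contradicting that $U$ is a star set. The same argument gives $\lambda\notin\operatorname{spec}(A_i-u_i)$, i.e. $\{u_i\}$ is a $\lambda$-star set of the acyclic matrix $A_i$ and $\phi(A_i-u_i,\lambda)\neq0$. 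Theorem \ref{simpleeig} then yields that $\boldsymbol{\alpha}_i|_{A_i}$ is a $\lambda$-eigenvector of $A_i$.

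Next I would verify the eigenvector equation (\ref{eigvectorrel}) for the zero-extended $\boldsymbol{\alpha}_i$ at every index of $A$. For $v\in A_i$ every neighbour of $v$ lies in $A_i\cup U_i$ (an edge to another component of $A-U_i$ is impossible), and $\boldsymbol{\alpha}_i$ vanishes on $U_i$, so the equation coincides with the one already known inside $A_i$. For $v$ in any other component the equation is $0=0$. The only equations that are not automatic are those at the deleted star indices $u_j$ $(j\neq i)$: because $A_i$ is connected and $A$ is acyclic, $u_j$ has at most one neighbour $z_j$ inside $A_i$, and the equation at $u_j$ collapses to the single requirement $\boldsymbol{\alpha}_i(z_j)=0$.

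To obtain $\boldsymbol{\alpha}_i(z_j)=0$ I would pass to the global eigenspace. Since $U$ is a star set, the restriction map $E_\lambda(A)\to\mathbb{R}^{U}$ is an isomorphism (a vector of $E_\lambda$ vanishing on $U$ would restrict to a $\lambda$-eigenvector of $A-U$, which is impossible), so there is a unique $\boldsymbol{\beta}_j\in E_\lambda(A)$ with $\boldsymbol{\beta}_j(u_l)=\delta_{jl}$. Restricting $\boldsymbol{\beta}_j$ to $A_i$ and using the $A$-eigenequations, a direct computation shows that its only defect as an $A_i$-eigenvector sits at $z_j$:
\[
(\lambda\mathbb{I}-A_i)\big(\boldsymbol{\beta}_j|_{A_i}\big)=w(z_ju_j)\,\boldsymbol{e}_{z_j},
\]
where $\boldsymbol{e}_{z_j}$ is the standard basis vector; indeed the right-hand side collects the couplings of $\boldsymbol{\beta}_j$ to $U_i$, and $\boldsymbol{\beta}_j$ vanishes on $U_i\setminus\{u_j\}$. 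Hence $w(z_ju_j)\boldsymbol{e}_{z_j}\in\operatorname{im}(\lambda\mathbb{I}-A_i)=\big(\ker(\lambda\mathbb{I}-A_i)\big)^{\perp}=(\operatorname{span}\boldsymbol{\alpha}_i)^{\perp}$, so $w(z_ju_j)\,\boldsymbol{\alpha}_i(z_j)=0$ and, as $w(z_ju_j)\neq0$, $\boldsymbol{\alpha}_i(z_j)=0$. This solvability/orthogonality step is where I expect the real work to lie, as it is what forces the local eigenvectors to patch together globally. (As a byproduct $z_j\neq u_i$, since $\boldsymbol{\alpha}_i(u_i)=\phi(A_i-u_i,\lambda)\neq0$; equivalently, no two star indices are adjacent.)

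Finally, independence is immediate. From the formula, $\boldsymbol{\alpha}_i(u_l)=\delta_{il}\,\phi(A_i-u_i,\lambda)$ with $\phi(A_i-u_i,\lambda)\neq0$, so the restrictions $\boldsymbol{\alpha}_i|_{U}$ are nonzero scalar multiples of the standard basis vectors of $\mathbb{R}^{U}$ and are therefore linearly independent. Being $k$ eigenvectors in the $k$-dimensional eigenspace $E_\lambda(A)$, the family $\{\boldsymbol{\alpha}_i\}_{i=1}^{k}$ is a basis, which completes the argument.
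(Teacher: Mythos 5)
Your proposal is correct, and it follows the same skeleton as the paper's proof --- use star-set theory to see that $\lambda$ becomes simple after deleting $U_i$, invoke Theorem \ref{simpleeig} rooted at $u_i$, extend by zero, and read off independence from the values on $U$ --- but it differs genuinely in how it handles the one step that carries real content: why the zero-extension satisfies the eigenvalue equation at the deleted indices $u_j$, $j\neq i$. The paper dispatches this in a single clause (``by theorems on star sets \dots we extend this eigenvector to $A$ by setting zero on $U_{i}$''), implicitly appealing to the standard dimension count showing that $\{\boldsymbol{x}\in\ker(\lambda\mathbb{I}-A):\boldsymbol{x}(u_j)=0 \text{ for all } u_j\in U_i\}$ restricts isomorphically onto the one-dimensional $\lambda$-eigenspace of $A-U_i$, so that every local eigenvector lifts by zero. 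You instead verify the equation at $u_j$ directly: it collapses to $\boldsymbol{\alpha}_i(z_j)=0$ for the unique neighbour $z_j$ of $u_j$ in $A_i$ (uniqueness being exactly where acyclicity enters the gluing), and you obtain this from the Fredholm alternative for the symmetric matrix $\lambda\mathbb{I}-A_i$ applied to the dual basis vector $\boldsymbol{\beta}_j$ of the restriction isomorphism $\ker(\lambda\mathbb{I}-A)\to\mathbb{R}^{U}$. Both routes are valid; the paper's is shorter but leans on unreferenced star-set machinery, while yours is self-contained and makes explicit the patching condition that the paper leaves silent. Your parenthetical observation about the statement is also correct and worth recording: as printed, $\phi(A_i,\lambda)=0$ because $\lambda\in\operatorname{spec}(A_i)$, so the displayed $\boldsymbol{\alpha}_i$ would be identically zero; the intended entry is $W(P_{u_i,v})\,\phi(A_i-P_{u_i,v},\lambda)$, which is precisely what Theorem \ref{simpleeig} produces for $A_i$.
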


\begin{proof}
By theorems on star sets, deleting $k-1$ vertices of a $\lambda$-star set from a graph, equivalently removing $k-1$ rows and columns of a $\lambda$-star set from a matrix, we obtain a graph or a matrix with simple eigenvalue $\lambda$. So, by Theorem \ref{simpleeig}, we acquire a $\lambda$-eigenvector corresponding to $A-U_{i}$ and we extend this eigenvector to $A$ by setting zero on $U_{i}$. These $k$ eigenvectors, obviously, are independent and the proof is complete.
\end{proof}

\begin{remark}[\textbf{Matrices of weighted trees}]
The results of this paper do not have conditions on the diagonal entries of acyclic matrices. Hence, we can use them for various matrices associated to weighted trees, such as the adjacency matrix and the Laplacian matrix.

\end{remark}

\section*{Acknowledgements}
The authors are indebted to the School of Mathematics, Institute for Research in Fundamental
Sciences (IPM), Tehran, Iran for the support. The research of the second author was in part supported by grant from IPM (No. 94050116).



\begin{thebibliography}{30}


\bibitem{BK2} A. Bahmani, D. Kiani.  \newblock On the multiplicity of the adjacency eigenvalues of graphs. \newblock {\em Linear Algebra Appl.}, 477:1--20, 2015.

\bibitem{BH} 
A. E. Brouwer,  W. H. Haemers.  \newblock {\em Spectra of Graphs}, Springer, 2012.

\bibitem{CDGT} D. Cvetkovic, M. Doob, I. Gutman, A. Torgašev. \newblock {\em Recent results in the theory of graph spectra.} Vol. 36. Elsevier, 1988.

\bibitem{CRS} D. Cvetkovi\'{c}, P. Rowlinson, S. Simi\'{c}.  \newblock {\em Eigenspaces of Graphs.} Cambridge University Press, Cambridge, 1997.


\bibitem{Go} C. D. Godsil. \newblock {\em Algebraic Combinatorics}. Chapman and Hall, New York, 1993.

\bibitem {Go2} C. D. Godsil. \newblock Algebraic matching theory.  \newblock {\em The Electronic Journal of Combinatorics}, 2.1:R8, 1995.


\bibitem{Gr} R. M. Gray  \newblock Toeplitz and circulant matrices: A review. \newblock {\em now publishers inc}, 2006.

\bibitem{HL} O. J. Heilmann and E. H. Lieb. \newblock Theory of monomer-dimer systems. \newblock {\em Commun. Math. Physics}, 25:190-232, 1972.

\bibitem{Kou} S. Kouachi  \newblock Eigenvalues and eigenvectors of tridiagonal matrices. \newblock {\em Electron. J. Linear Algebra} 15.April: 115-133, 2006.

\bibitem{Ko} Y. Koren. \newblock Drawing graphs by eigenvectors: theory and practice.  \newblock {\em Computers \& Mathematics with Applications} 49.11: 1867-1888, 2005.

\bibitem{KW} C. Y. Ku, K. B. Wong. \newblock Gallai--Edmonds structure theorem for weighted matching polynomial.\newblock {\em Linear Algebra Appl.},  439.11:3387-3411, 2013.


\bibitem{PSL} A. Pothen, H. D. Simon, K. P. Liou \newblock Partitioning sparse matrices with eigenvectors of graphs. \newblock {\em  SIAM journal on matrix analysis and applications.} 11.3: 430-452, 1990.

\bibitem{Pr} V. V. Prasolov. \newblock {\em Problems and theorems in linear algebra}. Vol. 134, American Mathematical Soc., 1994.


\bibitem{SR} A. J. Seary, W. D. Richards \newblock Partitioning networks by eigenvectors. \newblock {\em Proceedings of the International Conference on Social Networks.} Vol. 1, 1995.

\bibitem{Sp} D. A. Spielman \newblock Spectral Graph Theory and its Applications. \newblock {\em 48th Annual IEEE Symposium on Foundations of Computer Science} (FOCS'07), 29-38, 2007.

\end{thebibliography}
\end{document}